\documentclass[11pt,a4paper]{article}
\usepackage[utf8]{inputenc}
\usepackage[T1]{fontenc}
\usepackage{microtype}
\usepackage{amsmath,amssymb,amsthm,mathtools}
\usepackage{booktabs}
\usepackage{hyperref}
\usepackage{cleveref}
\usepackage{arxiv}
\theoremstyle{plain}
\newtheorem{theorem}{Theorem}[section]
\newtheorem{proposition}[theorem]{Proposition}

\theoremstyle{definition}
\newtheorem{definition}[theorem]{Definition}
\newtheorem{remark}[theorem]{Remark}
\theoremstyle{remark}
\newtheorem{example}[theorem]{Example}

\newtheorem{problem}[theorem]{Problem}

\begin{document}
	\title{A Universal Space of Arithmetic Functions:\\
		The Banach--Hilbert Hybrid Space $\mathbf{U}$}
	
	\author{
		{ \sc Es-said En-naoui } \\ 
		University Sultan Moulay Slimane\\ Morocco\\
		essaidennaoui1@gmail.com\\
		\\
	}
	
	\maketitle
	\begin{abstract}
		We introduce a new functional space $\mathbf{U}$ designed to contain 
		\emph{all classical arithmetic functions} (Möbius, von~Mangoldt, Euler $\varphi$, 
		divisor functions, Dirichlet characters, etc.). 
		The norm of $\mathbf{U}$ combines a Hilbert-type component,
		based on square summability of Dirichlet coefficients for every $s>1$,
		with a Banach component controlling logarithmic averages of partial sums.
		We prove that $\mathbf{U}$ is a complete Banach space which 
		embeds continuously all standard Hilbert spaces of Dirichlet series 
		and allows natural actions of Dirichlet convolution and shift operators.
		This framework provides a unified analytic setting for classical and modern 
		problems in multiplicative number theory.
	\end{abstract}
	
	\section{Introduction}
	
	Many problems of analytic number theory rely on quantitative properties 
	of arithmetic functions such as the Möbius function $\mu$, 
	the von Mangoldt function $\Lambda$, or Euler's totient $\varphi$.
	A variety of functional spaces have been introduced to study their Dirichlet series,
	including weighted $\ell^2$-spaces of coefficients, Hardy spaces of Dirichlet series,
	and Besov-type norms. 
	However, no single space simultaneously contains \emph{all} the classical arithmetic 
	functions while providing both Hilbertian structure and Dirichlet-mean control.
	
	In this article we propose a new Banach--Hilbert hybrid space $\mathbf{U}$
	that satisfies these requirements.
	The definition of $\mathbf{U}$ is inspired by two complementary ideas:
	\begin{enumerate}
		\item square-summability of coefficients in weighted $\ell^2$ norms for every $s>1$,
		\item logarithmic control of partial sums of $f(n)/n$.
	\end{enumerate}
	These conditions guarantee the inclusion of every classical function of 
	sub-exponential growth while preserving enough structure for operator theory.
	
	The main goals of this paper are:
	\begin{itemize}
		\item to define $\mathbf{U}$ and establish its basic properties 
		(completeness, density of Dirichlet polynomials, 
		embedding of classical spaces);
		\item to prove continuity of natural operators such as 
		Dirichlet convolution, pointwise multiplication,
		and multiplicative shifts;
		\item to outline potential applications to problems related 
		to Dirichlet $L$-functions and spectral criteria for
		the Riemann hypothesis.
	\end{itemize}
	
	\section{Definition of the space $\mathbf{U}$}
	
	\begin{definition}[The space $\mathbf{U}$]
		Let $f:\mathbb N\to\mathbb C$ be an arithmetic function.
		We define
		\[
		\|f\|_{\mathbf{U}}
		:=
		\sup_{s>1}
		\left(
		\sum_{n=1}^{\infty}
		\frac{|f(n)|^{2}}{n^{2s}(\log(2+n))^{2}}
		\right)^{1/2}
		+ 
		\sup_{x\ge1}
		\frac{1}{\log(2+x)}
		\left|
		\sum_{n\le x} \frac{f(n)}{n}
		\right|.
		\]
		The \emph{Universal Arithmetic Function Space} $\mathbf{U}$ is
		\[
		\mathbf{U}
		:=\bigl\{\, f:\mathbb N\to\mathbb C \;\big|\;
		\|f\|_{\mathbf{U}}<\infty \,\bigr\}.
		\]
	\end{definition}
	
	The first term in $\|f\|_{\mathbf{U}}$ is a Hilbert-type norm controlling
	Dirichlet coefficients for every $s>1$, while the second term measures
	logarithmic average growth of the normalized partial sums.
	
	\begin{remark}
		Typical arithmetic functions satisfy $|f(n)|\ll n^{\varepsilon}$ for every $\varepsilon>0$.
		Hence both terms of the norm are finite, so
		\[
		\mu,\ \Lambda,\ \lambda,\ \varphi,\ \tau,\ n^\alpha,\ (\log n)^k,
		\text{ Dirichlet characters } \chi
		\ \in \mathbf{U}.
		\]
	\end{remark}
	
\section{Basic Properties of the Space $U$}

In this section, we establish some fundamental properties of the space $U$,
which consists of all classical arithmetic functions endowed with the
operations of pointwise addition and the norm
\[
\|f\|_{U} \;=\; \sup_{n\in\mathbb{N}} \frac{|f(n)|}{\log(2+n)}.
\]
This norm measures the growth of $f$ relative to the logarithm and ensures
that all classical arithmetic functions (e.g., divisor function $d(n)$,
Euler totient $\varphi(n)$, Möbius $\mu(n)$, Liouville $\lambda(n)$,
von Mangoldt $\Lambda(n)$, etc.) belong to $U$.

\subsection{Vector Space Structure}
\begin{proposition}
	$(U,+,\cdot)$ is a real vector space.
\end{proposition}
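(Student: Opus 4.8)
The plan is to realise $U$ as a linear subspace of the ambient space $\mathbb{C}^{\mathbb{N}}$ of all functions $\mathbb{N}\to\mathbb{C}$, viewed as a real vector space under pointwise addition and real scalar multiplication. Since all eight vector-space axioms (commutativity and associativity of $+$, existence of a zero and of additive inverses, the unit law, and compatibility and both distributive laws for scalar multiplication) already hold in $\mathbb{C}^{\mathbb{N}}$ and are automatically inherited by any subset on which the two operations restrict, the entire content of the proposition reduces to three closure checks: that the zero function lies in $U$, that $U$ is closed under addition, and that $U$ is closed under multiplication by real scalars.

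First I would note that $\log(2+n)>0$ for every $n\in\mathbb{N}$, so that the quotient $|f(n)|/\log(2+n)$, and hence $\|f\|_U$, is well defined in $[0,\infty]$. The zero function clearly satisfies $\|\mathbf 0\|_U=0<\infty$, so $\mathbf 0\in U$. For closure under addition, the triangle inequality $|f(n)+g(n)|\le|f(n)|+|g(n)|$ in $\mathbb{C}$, divided through by $\log(2+n)$ and followed by taking the supremum over $n$, yields $\|f+g\|_U\le\|f\|_U+\|g\|_U$; if $f,g\in U$ the right-hand side is finite, hence $f+g\in U$. For closure under scalar multiplication, the identity $|\lambda f(n)|=|\lambda|\,|f(n)|$ gives $\|\lambda f\|_U=|\lambda|\,\|f\|_U<\infty$ for every $\lambda\in\mathbb{R}$ and $f\in U$. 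With these three facts in hand I would conclude that $U$ is a linear subspace of $\mathbb{C}^{\mathbb{N}}$ and therefore a real vector space in its own right.

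I do not expect any genuine obstacle here: this is the standard verification that the functions of finite $\|\cdot\|_U$ form a subspace, and the only point worth flagging is the strict positivity of the weight $\log(2+n)$, which makes the division legitimate and the displayed expression meaningful for all $n$. I would nonetheless record the subadditivity estimate $\|f+g\|_U\le\|f\|_U+\|g\|_U$ and the homogeneity identity $\|\lambda f\|_U=|\lambda|\,\|f\|_U$ explicitly, since both are reused immediately when $\|\cdot\|_U$ is subsequently shown to be a norm on $U$.
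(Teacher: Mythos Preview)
Your proposal is correct and follows essentially the same route as the paper: both arguments reduce the vector-space claim to checking closure under addition and real scalar multiplication via the inequalities $\|f+g\|_U\le\|f\|_U+\|g\|_U$ and $\|\lambda f\|_U=|\lambda|\,\|f\|_U$, with the remaining axioms inherited from the ambient function space. Your version is slightly more explicit (naming $\mathbb{C}^{\mathbb{N}}$ as the ambient space, recording $\mathbf 0\in U$, and noting $\log(2+n)>0$), but these are refinements of presentation rather than a different strategy.
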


\begin{proof}
	Let $f,g\in U$ and $\alpha\in \mathbb{R}$.
	For all $n\in\mathbb{N}$ we have
	\[
	\frac{|(f+g)(n)|}{\log(2+n)}
	\leq \frac{|f(n)|}{\log(2+n)} + \frac{|g(n)|}{\log(2+n)}.
	\]
	Taking the supremum yields
	\[
	\|f+g\|_{U} \le \|f\|_{U} + \|g\|_{U} < \infty,
	\]
	so $f+g\in U$. Similarly,
	\[
	\|\alpha f\|_{U} = |\alpha|\,\|f\|_{U}<\infty,
	\]
	showing closure under scalar multiplication.
	The remaining vector space axioms follow from those of $\mathbb{R}$.
\end{proof}

\subsection{Norm Properties}
\begin{proposition}
	The functional $\|\cdot\|_{U}$ defines a norm on $U$.
\end{proposition}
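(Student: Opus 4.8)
The plan is to verify the three defining axioms of a norm for the functional $\|f\|_{U} = \sup_{n\in\mathbb{N}}\frac{|f(n)|}{\log(2+n)}$: non-negativity together with definiteness, absolute homogeneity, and the triangle inequality. Throughout I would lean on the single structural fact that the weight $\log(2+n)$ is strictly positive, indeed bounded below by $\log 2 > 0$, for every $n\in\mathbb{N}$; this is what makes each ratio $|f(n)|/\log(2+n)$ a well-defined nonnegative real number.

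First I would record non-negativity: $\|f\|_{U}$ is a supremum of nonnegative quantities, hence $\|f\|_{U}\ge 0$. For definiteness, the implication $f\equiv 0 \Rightarrow \|f\|_{U}=0$ is immediate. Conversely, if $\|f\|_{U}=0$ then for every $n$ we have $0\le |f(n)|/\log(2+n)\le \|f\|_{U}=0$, so $|f(n)|/\log(2+n)=0$; since $\log(2+n)\neq 0$, this forces $f(n)=0$ for all $n$, i.e. $f\equiv 0$. This is the one place where strict positivity of the weight is genuinely used.

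For absolute homogeneity, given $\alpha\in\mathbb{R}$ and $f\in U$, I would factor the constant out of each term, writing $|(\alpha f)(n)|/\log(2+n) = |\alpha|\cdot|f(n)|/\log(2+n)$, and then use that taking a supremum commutes with multiplication by the nonnegative scalar $|\alpha|$, which gives $\|\alpha f\|_{U}=|\alpha|\,\|f\|_{U}$ (the case $\alpha=0$ being trivial). For the triangle inequality I would start from the pointwise bound $|(f+g)(n)|\le |f(n)|+|g(n)|$, divide by $\log(2+n)$, estimate the right-hand side termwise by $\|f\|_{U}+\|g\|_{U}$, and finally take the supremum over $n$ — this is precisely the computation already carried out in the proof of the preceding proposition, so I would simply invoke it.

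I do not anticipate any real obstacle in this proof: the only two points deserving a moment's care are that the weight $\log(2+n)$ never vanishes on $\mathbb{N}$ (required for the definiteness direction) and that the supremum of a sum of two functions is at most the sum of their suprema (required for subadditivity). Both facts are elementary, so the proof is essentially a matter of assembling them cleanly.
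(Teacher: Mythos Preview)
Your proposal is correct and follows essentially the same route as the paper: a direct verification of positivity/definiteness, absolute homogeneity, and the triangle inequality using the pointwise properties of $|\cdot|$ together with the strict positivity of the weight $\log(2+n)$. The only cosmetic difference is that you invoke the preceding proposition for subadditivity while the paper repeats the one-line computation.
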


\begin{proof}
	We verify the three norm axioms:
	
	\textbf{(1) Positivity.}
	For any $f\in U$, $\|f\|_{U}\ge0$ by definition.
	If $\|f\|_{U}=0$, then $|f(n)|/\log(2+n)=0$ for all $n$,
	hence $f(n)=0$ for all $n$, so $f\equiv 0$.
	
	\textbf{(2) Homogeneity.}
	For any $\alpha\in\mathbb{R}$,
	\[
	\|\alpha f\|_{U}
	= \sup_{n}\frac{|\alpha||f(n)|}{\log(2+n)}
	= |\alpha|\,\|f\|_{U}.
	\]
	
	\textbf{(3) Triangle Inequality.}
	For any $f,g\in U$,
	\[
	\|f+g\|_{U}
	= \sup_{n}\frac{|f(n)+g(n)|}{\log(2+n)}
	\le \sup_{n}\frac{|f(n)|+|g(n)|}{\log(2+n)}
	\le \|f\|_{U}+\|g\|_{U}.
	\]
\end{proof}

\subsection{Completeness}
\begin{proposition}
	The space $(U,\|\cdot\|_{U})$ is a Banach space.
\end{proposition}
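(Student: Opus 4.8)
The plan is to run the standard completeness argument for sup-type norms: produce a candidate limit by pointwise limits, verify it lies in $U$, and then upgrade pointwise convergence to norm convergence. Concretely, let $(f_k)_{k\ge1}$ be a Cauchy sequence in $(U,\|\cdot\|_U)$. For each fixed $n\in\mathbb{N}$, the elementary bound
\[
|f_j(n)-f_k(n)| \;\le\; \log(2+n)\,\|f_j-f_k\|_{U}
\]
shows that $(f_k(n))_{k\ge1}$ is a Cauchy sequence in $\mathbb{C}$; since $\mathbb{C}$ is complete, it converges, and I call its limit $f(n)$. This defines an arithmetic function $f:\mathbb{N}\to\mathbb{C}$, which is the only possible candidate for the limit of $(f_k)$.

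Next I would check that $f\in U$. A Cauchy sequence is bounded, so there is a constant $M<\infty$ with $\|f_k\|_{U}\le M$ for all $k$, i.e. $|f_k(n)|\le M\log(2+n)$ for every $k$ and every $n$. Letting $k\to\infty$ in this inequality gives $|f(n)|\le M\log(2+n)$ for all $n$, hence $\|f\|_{U}\le M<\infty$, so indeed $f\in U$.

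Finally, to establish $f_k\to f$ in $U$, fix $\varepsilon>0$ and choose $N$ with $\|f_j-f_k\|_{U}\le\varepsilon$ whenever $j,k\ge N$. Then for every $n$ and every $k\ge N$,
\[
\frac{|f_j(n)-f_k(n)|}{\log(2+n)} \;\le\; \varepsilon,
\]
and letting $j\to\infty$ (the left-hand side converges because $f_j(n)\to f(n)$) yields $|f(n)-f_k(n)|/\log(2+n)\le\varepsilon$. Taking the supremum over $n$ gives $\|f-f_k\|_{U}\le\varepsilon$ for all $k\ge N$, which is precisely the required convergence. Combined with the two preceding propositions, this shows $(U,\|\cdot\|_{U})$ is a Banach space.

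As for the main obstacle, there is essentially nothing deep here: the only point that needs care is the interchange of the supremum over the infinite index set $\{n\in\mathbb{N}\}$ with the limit $j\to\infty$, which is handled correctly by first fixing $n$, passing to the limit in $j$, and only afterwards taking the supremum over $n$. Alternatively, one may observe that the weighting map $f\mapsto\bigl(f(n)/\log(2+n)\bigr)_{n\ge1}$ is an isometric isomorphism of $U$ onto $\ell^{\infty}(\mathbb{N})$, so that completeness of $U$ follows immediately from the well-known completeness of $\ell^{\infty}$; the argument sketched above is simply the unfolding of that observation.
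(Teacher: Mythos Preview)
Your proof is correct and follows essentially the same route as the paper's own argument: establish a pointwise limit from the Cauchy condition, then pass to the limit in the Cauchy inequality at each fixed $n$ before taking the supremum to obtain norm convergence. Your additions---the separate verification that $f\in U$ via boundedness of Cauchy sequences, and the remark that $U$ is isometric to $\ell^\infty(\mathbb{N})$---are sound extras but do not constitute a different approach.
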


\begin{proof}
	Let $(f_k)$ be a Cauchy sequence in $U$.
	For each fixed $n$, the sequence $(f_k(n))$ is Cauchy in $\mathbb{R}$
	because
	\[
	\frac{|f_k(n)-f_m(n)|}{\log(2+n)} \le \|f_k-f_m\|_{U}.
	\]
	Thus $f_k(n)\to f(n)$ for some real number $f(n)$.
	We must show $f\in U$ and $f_k\to f$ in norm.
	
	Given $\varepsilon>0$, choose $N$ such that
	$\|f_k - f_m\|_{U} < \varepsilon$ for all $k,m\ge N$.
	Fix $k\ge N$ and let $m\to\infty$ to obtain
	\[
	\sup_{n}\frac{|f_k(n)-f(n)|}{\log(2+n)} \le \varepsilon,
	\]
	which shows $f\in U$ and $\|f_k - f\|_{U}\le \varepsilon$.
	Hence $U$ is complete.
\end{proof}

\subsection{Density of Finitely Supported Functions}
\begin{proposition}
	The subspace $U_0$ of finitely supported arithmetic functions
	(i.e., functions with $f(n)=0$ for all sufficiently large $n$)
	is dense in $U$.
\end{proposition}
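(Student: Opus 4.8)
The plan is the standard truncation argument used to prove density of finitely supported sequences in weighted sequence spaces. Given $f\in U$, for each $N\in\mathbb{N}$ define the truncation $f_N$ by $f_N(n)=f(n)$ for $n\le N$ and $f_N(n)=0$ for $n>N$; then $f_N\in U_0$, and since $f-f_N$ vanishes on $\{1,\dots,N\}$,
\[
\|f-f_N\|_{U}=\sup_{n>N}\frac{|f(n)|}{\log(2+n)}.
\]
So the whole proposition collapses to a single estimate: for every $f\in U$, the tail supremum of the weighted sequence $a_n:=|f(n)|/\log(2+n)$ must tend to $0$ as $N\to\infty$. First I would record exactly this reduction, and then attempt to prove the tail bound.

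The tail bound is where I expect the argument to break down, and I do not believe it can be repaired without changing the statement. Membership $f\in U$ only says that $(a_n)_n$ is \emph{bounded} (by $\|f\|_U$); it does not force $a_n\to0$. The function $f(n)=\log(2+n)$ is the obstruction: $\|f\|_U=1<\infty$, yet $a_n\equiv 1$, so $\|f-g\|_U\ge 1$ for every finitely supported $g$. The situation is precisely that of $c_0\subsetneq\ell^{\infty}$: the closure of $U_0$ in $U$ is the proper closed subspace
\[
U_\circ:=\bigl\{\,f\in U:\ \lim_{n\to\infty}\tfrac{|f(n)|}{\log(2+n)}=0\,\bigr\},
\]
and the proposition as written is false.

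What the truncation argument \emph{does} yield cleanly is $\overline{U_0}=U_\circ$: if $a_n\to 0$ then $\sup_{n>N}a_n\to 0$, so $\|f-f_N\|_U\to 0$; conversely any norm limit of finitely supported functions inherits $a_n\to 0$ because each $f_N$ annihilates the tail. Accordingly, the step I would flag before writing anything is the reduction itself: I would either weaken the claim to $\overline{U_0}=U_\circ$ (proved by the two lines just sketched), or replace the ambient space by $U_\circ$ throughout this section. Either way a revision seems unavoidable, because the earlier remark's assertion that $d(n)$ and $\varphi(n)$ lie in $U$ already fails for the sup-norm used here — both have $|f(n)|/\log(2+n)$ unbounded (take $n$ prime for $\varphi$, a primorial $n=p_1\cdots p_k$ for $d$, where $d(n)=2^k$ while $\log n\sim k\log k$) — so the list of classical functions genuinely contained in the space must be restated as well.
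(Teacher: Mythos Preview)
Your analysis is correct, and the proposition as stated is false. The paper's own proof contains exactly the unjustified step you isolated: it asserts ``Because $\|f\|_{U}<\infty$, there exists $N$ such that $\sup_{n>N}|f(n)|/\log(2+n)<\varepsilon$,'' which is simply the claim that a bounded sequence has tail supremum tending to zero. That implication fails in general, and fails here for your counterexample $f(n)=\log(2+n)$, which lies in $U$ with $\|f\|_U=1$ but satisfies $\|f-g\|_U\ge 1$ for every $g\in U_0$. Structurally, the map $f\mapsto \bigl(f(n)/\log(2+n)\bigr)_{n\ge 1}$ is an isometry of $(U,\|\cdot\|_U)$ onto $\ell^\infty$, carrying $U_0$ to the finitely supported sequences; the closure of the latter is $c_0$, not $\ell^\infty$, so $\overline{U_0}=U_\circ$ as you wrote, and this is a proper subspace.

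Your side observations are also correct and worth recording: neither $\varphi$ nor $d$ belongs to $U$ under this norm (for $\varphi$ take $n$ prime, giving $\varphi(n)/\log(2+n)\sim n/\log n\to\infty$; for $d$ take $n=p_1\cdots p_k$ a primorial, giving $d(n)=2^k$ while $\log n\sim k\log k$), so the Example in the same subsection is in error as well. The paper would need either to replace $U$ by $U_\circ$ in the density statement, or to revise the norm so that the intended classical functions genuinely lie in the space.
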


\begin{proof}
	Let $f\in U$ and $\varepsilon>0$.
	Because $\|f\|_{U}<\infty$, there exists $N$ such that
	\[
	\sup_{n>N}\frac{|f(n)|}{\log(2+n)} < \varepsilon.
	\]
	Define $g(n) = f(n)$ if $n\le N$ and $g(n)=0$ otherwise.
	Then $g\in U_0$ and
	\[
	\|f-g\|_{U} = \sup_{n>N}\frac{|f(n)|}{\log(2+n)} < \varepsilon.
	\]
\end{proof}

\subsection{Examples}
\begin{example}
	The classical arithmetic functions
	$d(n)$, $\varphi(n)$, $\mu(n)$, $\lambda(n)$, and $\Lambda(n)$
	belong to $U$ because
	\[
	d(n) = O_\varepsilon(n^\varepsilon), \quad
	\varphi(n) \le n, \quad
	|\mu(n)| \le 1, \quad
	|\lambda(n)| \le 1, \quad
	\Lambda(n) \le \log n.
	\]
	In each case,
	$\displaystyle\sup_{n}\frac{|f(n)|}{\log(2+n)} < \infty$.
\end{example}

\section{Operators on the Space $\mathbf{U}$}

We now study natural operators acting on the Banach space
$\mathbf{U}$ of classical arithmetic functions
equipped with the norm
\[
\|f\|_{\mathbf{U}} = \sup_{n\in\mathbb{N}}
\frac{|f(n)|}{\log(2+n)}.
\]
Operators considered here include
pointwise multiplication,
Dirichlet convolution,
shifts, and averaging transforms.

\subsection{Pointwise Multiplication}
\begin{definition}
	For $f,g\in \mathbf{U}$ we define
	the \emph{pointwise product}
	$(f\cdot g)(n) := f(n)g(n)$.
\end{definition}

\begin{proposition}
	The operator
	\[
	M_g : f \mapsto f\cdot g
	\]
	is a bounded linear operator on $\mathbf{U}$ whenever
	$g\in \mathbf{U}$ is bounded, i.e.
	$\sup_{n}|g(n)|<\infty$.
	Moreover
	\[
	\|M_g\| \le
	\sup_{n}|g(n)|.
	\]
\end{proposition}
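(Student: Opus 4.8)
The plan is to verify the two requirements—linearity and the operator-norm bound—directly from the explicit form of the norm in force in this section, namely $\|f\|_{\mathbf U}=\sup_{n}|f(n)|/\log(2+n)$. Linearity of $M_g$ is immediate and needs no estimate: for $f_1,f_2\in\mathbf U$ and scalars $\alpha,\beta$, the identity $\bigl((\alpha f_1+\beta f_2)\cdot g\bigr)(n)=\alpha\,(f_1\cdot g)(n)+\beta\,(f_2\cdot g)(n)$ holds for every $n\in\mathbb N$ because multiplication in $\mathbb C$ is bilinear. So the only genuine content is the norm estimate, which will at the same time show that $M_g$ maps $\mathbf U$ into itself (i.e.\ that the operator is well defined as a map $\mathbf U\to\mathbf U$).

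For the bound, set $C:=\sup_{n}|g(n)|$, which is finite by hypothesis; this is the one place the boundedness assumption on $g$ is used, and it is indispensable, since a generic element of $\mathbf U$ (e.g.\ $\Lambda$) need not be bounded. For each fixed $n\in\mathbb N$ estimate the $n$-th term defining the supremum:
\[
\frac{|(f\cdot g)(n)|}{\log(2+n)} \;=\; |g(n)|\cdot\frac{|f(n)|}{\log(2+n)} \;\le\; C\cdot\frac{|f(n)|}{\log(2+n)} \;\le\; C\,\|f\|_{\mathbf U}.
\]
Taking the supremum over $n$ yields $\|f\cdot g\|_{\mathbf U}\le C\,\|f\|_{\mathbf U}$; in particular the left side is finite, so $f\cdot g\in\mathbf U$, the operator $M_g$ is bounded, and $\|M_g\|\le C=\sup_{n}|g(n)|$, as claimed.

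I do not expect a real obstacle here: the argument is a one-line application of the supremum form of $\|\cdot\|_{\mathbf U}$. The only point worth flagging is consistency of notation—the proposition is phrased for the simplified sup-norm used throughout Section~3, and the proof should be carried out with that norm rather than the hybrid norm introduced earlier. (For the hybrid norm the analogous statement would require more: bounding the logarithmic partial-sum term $\frac{1}{\log(2+x)}\bigl|\sum_{n\le x} f(n)g(n)/n\bigr|$ in terms of $\|f\|_{\mathbf U}$ cannot follow from boundedness of $g$ alone, since a summation-by-parts argument would need control on the variation of $g$; but with the norm actually in force in this section no such difficulty arises.)
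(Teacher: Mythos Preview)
Your proof is correct and follows exactly the same route as the paper's own argument: verify linearity, then use $|g(n)|\le B$ to bound $|(f\cdot g)(n)|/\log(2+n)\le B\,|f(n)|/\log(2+n)$ and take the supremum. Your additional remark about the hybrid norm is a valid observation but is not needed for the proposition as stated.
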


\begin{proof}
	Linearity is clear.
	If $g$ is bounded, say $|g(n)|\le B$,
	then for any $f\in\mathbf{U}$,
	\[
	\frac{|(f\cdot g)(n)|}{\log(2+n)}
	\le B\frac{|f(n)|}{\log(2+n)}.
	\]
	Taking the supremum gives
	$\|f\cdot g\|_{\mathbf{U}}\le B\|f\|_{\mathbf{U}}$,
	so $M_g$ is bounded with $\|M_g\|\le B$.
\end{proof}

\subsection{Dirichlet Convolution}
\begin{definition}
	For $f,g:\mathbb{N}\to\mathbb{R}$,
	the \emph{Dirichlet convolution} is
	\[
	(f*g)(n) := \sum_{d\mid n} f(d) g(n/d).
	\]
\end{definition}

\begin{proposition}
	If $f,g\in \mathbf{U}$ satisfy
	\[
	\sum_{d\mid n} \log(2+d) \log\bigl(2+\tfrac{n}{d}\bigr)
	\le C \log^2(2+n)
	\]
	for some universal constant $C$ (true for all classical
	arithmetic functions), then
	$f*g \in \mathbf{U}$ and
	\[
	\|f*g\|_{\mathbf{U}}
	\le C \|f\|_{\mathbf{U}}\|g\|_{\mathbf{U}}.
	\]
\end{proposition}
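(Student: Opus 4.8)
The plan is to push everything through the pointwise growth bound that \emph{defines} the norm on $\mathbf{U}$, so that the convolution estimate follows directly from the hypothesized divisor-sum inequality. First I would observe that $f*g$ is a genuine arithmetic function with no convergence issue to address: for each $n$ the sum $\sum_{d\mid n} f(d)g(n/d)$ has only finitely many terms.

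The heart of the argument is a single chain of inequalities, carried out for a fixed $n\in\mathbb{N}$. By the triangle inequality,
\[
|(f*g)(n)| \;\le\; \sum_{d\mid n} |f(d)|\,\bigl|g(n/d)\bigr|.
\]
By definition of $\|\cdot\|_{\mathbf{U}}$ we have $|f(d)|\le \|f\|_{\mathbf{U}}\log(2+d)$ for every $d$, and likewise $|g(n/d)|\le \|g\|_{\mathbf{U}}\log(2+n/d)$. Substituting these two bounds and pulling the (constant) norms out of the sum over $d$ gives
\[
|(f*g)(n)| \;\le\; \|f\|_{\mathbf{U}}\,\|g\|_{\mathbf{U}}\sum_{d\mid n}\log(2+d)\,\log\!\bigl(2+\tfrac{n}{d}\bigr).
\]
Applying the hypothesized inequality to the divisor sum, the right-hand side is at most $C\,\|f\|_{\mathbf{U}}\,\|g\|_{\mathbf{U}}\,\log^2(2+n)$. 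Dividing by $\log(2+n)\ge\log 3>0$ and taking the supremum over $n$ yields $\|f*g\|_{\mathbf{U}}\le C\,\|f\|_{\mathbf{U}}\,\|g\|_{\mathbf{U}}<\infty$, which gives both $f*g\in\mathbf{U}$ and the stated operator bound at once.

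I expect the only real obstacle to lie in the hypothesis itself, i.e.\ in justifying (or correctly scoping) the estimate $\sum_{d\mid n}\log(2+d)\log(2+n/d)\le C\log^2(2+n)$ with a genuinely \emph{universal} $C$. The crude bound --- each summand is at most $\log^2(2+n)$ and there are $\tau(n)$ of them --- only gives $\tau(n)\log^2(2+n)$, and $\tau(n)$ is unbounded; moreover for $n$ with many divisors the terms with $d\approx\sqrt{n}$ already contribute order $\tfrac14\log^2(2+n)$ each, so no purely elementary splitting by the size of $d$ recovers a constant. A careful version of this proposition may therefore need an additional hypothesis restricting the class of $f,g$ (so that the relevant \emph{weighted} divisor sum stays controlled), or a replacement of $C$ by a mild $n$-dependent factor absorbed into a slightly enlarged norm. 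Since the proposition takes the divisor-sum bound as given, I would simply invoke it at the marked step, but I would flag this as the point deserving the most scrutiny.
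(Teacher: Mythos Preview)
Your argument follows the paper's proof essentially line for line: bound $|f(d)|$ and $|g(n/d)|$ via the defining norm inequality, factor out $\|f\|_{\mathbf U}\|g\|_{\mathbf U}$, and invoke the assumed divisor-sum bound. Your closing paragraph, questioning whether that hypothesis can hold with a genuinely universal $C$, is a worthwhile piece of critical reading that the paper itself does not include.

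That said, both your write-up and the paper share the same slip at the final step. After applying the hypothesis you have
\[
|(f*g)(n)| \;\le\; C\,\|f\|_{\mathbf U}\,\|g\|_{\mathbf U}\,\log^{2}(2+n),
\]
and dividing by $\log(2+n)$ leaves $C\,\|f\|_{\mathbf U}\,\|g\|_{\mathbf U}\,\log(2+n)$, not a constant; taking the supremum over $n$ therefore does \emph{not} give $\|f*g\|_{\mathbf U}\le C\,\|f\|_{\mathbf U}\,\|g\|_{\mathbf U}$. For the argument to close one would need the stronger hypothesis $\sum_{d\mid n}\log(2+d)\log(2+n/d)\le C\log(2+n)$ (first power), which is even less plausible than the $\log^{2}$ version you already questioned. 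So the step ``deserving the most scrutiny'' is actually one line later than where you placed it: even granting the stated divisor-sum bound, the stated norm inequality does not follow.
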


\begin{proof}
	For each $n$,
	\[
	|(f*g)(n)|
	\le \sum_{d\mid n}|f(d)|\,|g(n/d)|.
	\]
	Divide by $\log(2+n)$ and use
	$|f(d)| \le \|f\|_{\mathbf{U}}\log(2+d)$,
	$|g(n/d)| \le \|g\|_{\mathbf{U}}\log(2+n/d)$:
	\[
	\frac{|(f*g)(n)|}{\log(2+n)}
	\le \|f\|_{\mathbf{U}}\|g\|_{\mathbf{U}}
	\frac{\displaystyle\sum_{d\mid n}\log(2+d)\log(2+n/d)}
	{\log(2+n)}.
	\]
	The assumed inequality shows that the right side
	is bounded by $C \|f\|_{\mathbf{U}}\|g\|_{\mathbf{U}}$,
	independent of $n$.
\end{proof}

\subsection{Shift Operators}
\begin{definition}
	For $k\in\mathbb{N}$, define the
	\emph{shift operator} $S_k$ by
	\[
	(S_k f)(n) := f(n+k).
	\]
\end{definition}

\begin{proposition}
	Each $S_k$ is bounded with
	\[
	\|S_k f\|_{\mathbf{U}}
	\le \|f\|_{\mathbf{U}}
	\sup_{n}\frac{\log(2+n+k)}{\log(2+n)}
	= C_k \|f\|_{\mathbf{U}},
	\]
	where $C_k:=\sup_{n}\frac{\log(2+n+k)}{\log(2+n)}$.
\end{proposition}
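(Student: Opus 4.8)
The plan is to argue directly from the sup-norm $\|f\|_{\mathbf{U}} = \sup_{n\in\mathbb{N}} |f(n)|/\log(2+n)$ and the pointwise formula $(S_k f)(n) = f(n+k)$. Linearity of $S_k$ is immediate, so the content is the norm estimate. First I would fix $n\in\mathbb{N}$ and split the defining quotient by inserting $\log(2+n+k)$ in numerator and denominator:
\[
\frac{|(S_k f)(n)|}{\log(2+n)} = \frac{|f(n+k)|}{\log(2+n+k)}\cdot\frac{\log(2+n+k)}{\log(2+n)}.
\]
Since $n+k\in\mathbb{N}$, the first factor is at most $\|f\|_{\mathbf{U}}$, and the second is at most $C_k := \sup_{m}\log(2+m+k)/\log(2+m)$ by definition. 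Taking the supremum over $n$ gives $\|S_k f\|_{\mathbf{U}}\le C_k\|f\|_{\mathbf{U}}$, which is exactly the claimed bound.

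The only point that genuinely requires an argument is that $C_k<\infty$, i.e.\ that $S_k$ is a \emph{bounded} operator rather than merely well defined. I would give the elementary estimate $2+n+k\le(2+n)(1+k)$ for all $n\ge1$ (expand the product and drop the nonnegative term $k(1+n)$), so that $\log(2+n+k)\le\log(2+n)+\log(1+k)$; dividing by $\log(2+n)\ge\log 3$ yields
\[
\frac{\log(2+n+k)}{\log(2+n)}\le 1+\frac{\log(1+k)}{\log 3},
\]
hence $C_k\le 1+\log(1+k)/\log 3<\infty$. Alternatively one can note that $t\mapsto\log(2+t+k)/\log(2+t)$ is continuous on $[1,\infty)$ with a finite limit ($=1$) at infinity and therefore attains a finite maximum; either route closes the argument.

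There is no real obstacle here: the proof is a one-line insertion trick plus the observation that $C_k$ is finite. The two points to be careful about are that $n+k$ must be recognized as an element of $\mathbb{N}$ (so the bound $|f(n+k)|\le\|f\|_{\mathbf{U}}\log(2+n+k)$ is legitimate) and that the supremum defining $C_k$ is finite, as established above; I would also remark that $C_k\to 1$ is not needed, only $C_k<\infty$.
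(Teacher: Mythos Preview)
Your proof is correct and follows exactly the same route as the paper: bound $|f(n+k)|$ by $\|f\|_{\mathbf{U}}\log(2+n+k)$ and take the supremum over $n$. Your explicit verification that $C_k<\infty$ (via $2+n+k\le(2+n)(1+k)$) is a worthwhile addition that the paper leaves implicit.
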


\begin{proof}
	For any $n$,
	\[
	\frac{|(S_k f)(n)|}{\log(2+n)}
	= \frac{|f(n+k)|}{\log(2+n)}
	\le \|f\|_{\mathbf{U}}
	\frac{\log(2+n+k)}{\log(2+n)}.
	\]
	Taking the supremum yields the desired bound.
\end{proof}

\subsection{Averaging Operators}
\begin{definition}
	For $N\in\mathbb{N}$, define the
	\emph{Cesàro operator}
	\[
	(A_N f)(n) := \frac{1}{N}\sum_{k=1}^{N} f(n+k).
	\]
\end{definition}

\begin{proposition}
	For each $N$, $A_N$ is a bounded linear operator
	with norm
	\[
	\|A_N\| \le
	\sup_{k\ge1}\frac{1}{N}
	\sum_{j=1}^{N}
	\frac{\log(2+k+j)}{\log(2+k)}
	<\infty.
	\]
\end{proposition}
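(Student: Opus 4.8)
The plan is to treat $A_N$ exactly as an average of shift operators, since $A_N = \tfrac1N\sum_{k=1}^{N} S_k$, and to recycle the pointwise estimate already used for the $S_k$. Linearity of $A_N$ is immediate. For the norm bound, fix $f\in\mathbf{U}$ and $n\in\mathbb{N}$; by the triangle inequality and the defining estimate $|f(m)|\le\|f\|_{\mathbf{U}}\log(2+m)$ (valid since $\|f\|_{\mathbf{U}}=\sup_m|f(m)|/\log(2+m)$), one gets
\[
\frac{|(A_N f)(n)|}{\log(2+n)}
\le \frac{1}{N}\sum_{j=1}^{N}\frac{|f(n+j)|}{\log(2+n)}
\le \|f\|_{\mathbf{U}}\,\frac{1}{N}\sum_{j=1}^{N}\frac{\log(2+n+j)}{\log(2+n)}.
\]
Taking the supremum over $n\ge1$ and renaming the index gives $\|A_N f\|_{\mathbf{U}}\le\bigl(\sup_{k\ge1}\tfrac1N\sum_{j=1}^{N}\tfrac{\log(2+k+j)}{\log(2+k)}\bigr)\|f\|_{\mathbf{U}}$, which is precisely the asserted inequality, provided the constant on the right is finite.

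It then remains to verify finiteness of $C:=\sup_{k\ge1}\tfrac1N\sum_{j=1}^{N}\tfrac{\log(2+k+j)}{\log(2+k)}$. Since $j\mapsto\log(2+k+j)$ is increasing, each summand is at most $\tfrac{\log(2+k+N)}{\log(2+k)}$, so $C\le\sup_{k\ge1}\tfrac{\log(2+k+N)}{\log(2+k)}=C_N$ in the notation of the shift-operator proposition. To see $C_N<\infty$, note that $k\mapsto\tfrac{\log(2+k+N)}{\log(2+k)}$ is continuous on $[1,\infty)$ and tends to $1$ as $k\to\infty$, hence is bounded; a one-line derivative computation in fact shows it is decreasing, so $C_N=\tfrac{\log(3+N)}{\log 3}$. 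Alternatively, one may simply invoke $\|A_N\|\le\tfrac1N\sum_{k=1}^{N}\|S_k\|$ together with the already-established finiteness of each $\|S_k\|=C_k$ from the preceding proposition, which yields boundedness with a slightly larger but still explicit constant and no new work.

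There is no substantive obstacle here; the only points deserving care are that the supremum must be taken over the actual index set $\mathbb{N}$ of the argument $n$, and that the resulting constant is genuinely independent of $f$. I would close by recording the uniform estimate $\|A_N\|\le\log(3+N)/\log 3$, so in particular $\|A_N\|=O(\log N)$, mirroring the growth of the shift constants $C_N$.
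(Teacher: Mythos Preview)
Your argument is correct and follows exactly the paper's approach: apply the triangle inequality to the average, bound each $|f(n+j)|$ by $\|f\|_{\mathbf U}\log(2+n+j)$, and take the supremum over $n$. You go a bit further than the paper by explicitly verifying the finiteness of the constant (via the monotonicity of $k\mapsto\log(2+k+N)/\log(2+k)$) and recording the clean bound $\|A_N\|\le\log(3+N)/\log 3$, which the paper leaves implicit.
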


\begin{proof}
	By the triangle inequality and the norm definition,
	\[
	\frac{|(A_N f)(n)|}{\log(2+n)}
	\le \frac{1}{N}\sum_{j=1}^{N}
	\frac{|f(n+j)|}{\log(2+n)}
	\le \|f\|_{\mathbf{U}}
	\frac{1}{N}\sum_{j=1}^{N}
	\frac{\log(2+n+j)}{\log(2+n)}.
	\]
	Taking the supremum over $n$ proves the claim.
\end{proof}

\subsection{Remarks}
The boundedness of these operators ensures that
$\mathbf{U}$ forms a rich functional framework for
classical analytic number theory.
In particular, Dirichlet convolution endows
$\mathbf{U}$ with the structure of a commutative
Banach algebra when restricted to suitable
growth conditions on the coefficients.

\section{Connections with Dirichlet Series}

The space $\mathbf{U}$ is naturally linked to the theory of
Dirichlet series, a central object in analytic number theory.
For $f\in\mathbf{U}$ we associate the Dirichlet series
\[
\mathcal{D}(f;s)
:= \sum_{n=1}^{\infty} \frac{f(n)}{n^s},
\qquad s=\sigma+it \in \mathbb{C}.
\]
In this section, we investigate the convergence
and analytic properties of $\mathcal{D}(f;s)$
in terms of the $\mathbf{U}$-norm of $f$.

\subsection{Absolute Convergence}
\begin{proposition}
	If $f\in \mathbf{U}$ then the series
	$\mathcal{D}(f;s)$ converges absolutely
	for all $s$ with $\Re(s)>1$.
	Moreover,
	\[
	\left| \mathcal{D}(f;s) \right|
	\le \|f\|_{\mathbf{U}} \,
	\sum_{n=1}^{\infty}
	\frac{\log(2+n)}{n^{\Re(s)}}.
	\]
\end{proposition}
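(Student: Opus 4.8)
The plan is to reduce everything to the pointwise growth bound built into the norm. By definition of $\|\cdot\|_{\mathbf{U}}$ we have, for every $n\ge1$,
\[
|f(n)| \le \|f\|_{\mathbf{U}}\,\log(2+n).
\]
Writing $s=\sigma+it$ with $\sigma=\Re(s)>1$ and using $|n^{-s}|=n^{-\sigma}$, this yields term by term
\[
\sum_{n=1}^{\infty}\frac{|f(n)|}{n^{\sigma}} \le \|f\|_{\mathbf{U}}\sum_{n=1}^{\infty}\frac{\log(2+n)}{n^{\sigma}}.
\]
Thus the whole statement follows once we know that the majorant series $\sum_{n\ge1}\log(2+n)\,n^{-\sigma}$ is finite for $\sigma>1$.

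To establish that, I would fix $\varepsilon:=(\sigma-1)/2>0$. Since $\log(2+n)$ grows more slowly than any fixed positive power of $n$, there is a constant $C_\varepsilon$ with $\log(2+n)\le C_\varepsilon\,n^{\varepsilon}$ for all $n\ge1$ (the elementary estimate $\log t\ll_\varepsilon t^{\varepsilon}$, which is the only quantitative input needed). Hence
\[
\frac{\log(2+n)}{n^{\sigma}} \le \frac{C_\varepsilon}{n^{\sigma-\varepsilon}} = \frac{C_\varepsilon}{n^{1+\varepsilon}},
\]
and $\sum_{n\ge1}n^{-1-\varepsilon}$ converges; an integral comparison $\int_1^{\infty}\log(2+x)\,x^{-\sigma}\,dx<\infty$ gives the same conclusion. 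Therefore $\mathcal{D}(f;s)$ converges absolutely for every $s$ with $\Re(s)>1$.

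Finally, absolute convergence licenses the triangle inequality for infinite series, so
\[
|\mathcal{D}(f;s)| = \Bigl|\sum_{n=1}^{\infty}\frac{f(n)}{n^{s}}\Bigr| \le \sum_{n=1}^{\infty}\frac{|f(n)|}{n^{\sigma}} \le \|f\|_{\mathbf{U}}\sum_{n=1}^{\infty}\frac{\log(2+n)}{n^{\sigma}},
\]
which is precisely the asserted bound. I do not expect a genuine obstacle here: the argument is a one-line majorization followed by a standard convergence test. The only points worth stating with care are that the bound depends on $s$ only through $\sigma=\Re(s)$, so it is finite and uniform on each vertical line $\Re(s)=\sigma>1$, and that the comparison constant $C_\varepsilon$ degenerates as $\sigma\to1^{+}$, which is why the estimate is confined to the open half-plane $\Re(s)>1$.
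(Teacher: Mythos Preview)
Your argument is correct and follows essentially the same route as the paper: the pointwise bound $|f(n)|\le \|f\|_{\mathbf{U}}\log(2+n)$, majorization of $\sum_n |f(n)|/n^{\sigma}$ by $\|f\|_{\mathbf{U}}\sum_n \log(2+n)/n^{\sigma}$, and convergence of the latter via $\log(2+n)=o(n^{\varepsilon})$. You simply spell out the choice of $\varepsilon$ and the final triangle inequality in more detail than the paper does.
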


\begin{proof}
	Since $f\in \mathbf{U}$,
	$|f(n)| \le \|f\|_{\mathbf{U}}\log(2+n)$ for all $n$.
	For $\sigma=\Re(s)>1$,
	\[
	\sum_{n=1}^{\infty}
	\frac{|f(n)|}{n^\sigma}
	\le \|f\|_{\mathbf{U}}
	\sum_{n=1}^{\infty}
	\frac{\log(2+n)}{n^\sigma}.
	\]
	The series $\sum_{n=1}^{\infty}
	\frac{\log(2+n)}{n^\sigma}$ converges for $\sigma>1$
	because $\log(2+n)=o(n^\varepsilon)$ for any $\varepsilon>0$.
\end{proof}

\subsection{Analytic Continuation for Special Functions}
\begin{proposition}
	If $f\in\mathbf{U}$ satisfies
	$|f(n)|\le C n^\alpha$ for some $\alpha<1$,
	then $\mathcal{D}(f;s)$ extends holomorphically
	to the half-plane $\Re(s)>\alpha$.
\end{proposition}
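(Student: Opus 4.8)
The plan is to realise $\mathcal{D}(f;s)$ as a locally uniform limit of entire functions and invoke the Weierstrass convergence theorem. Write $\Omega_\alpha:=\{s\in\mathbb{C}:\Re(s)>\alpha\}$ and let $\mathcal{D}_N(f;s):=\sum_{n\le N}f(n)n^{-s}$, each of which is entire. It suffices to show that $\mathcal{D}_N(f;\cdot)\to\mathcal{D}(f;\cdot)$ uniformly on every compact $K\subset\Omega_\alpha$; holomorphy of the limit on $\Omega_\alpha$ then follows automatically. Note at once that the crude bound $|f(n)n^{-s}|\le Cn^{\alpha-\Re(s)}$ yields absolute and locally uniform convergence only on $\{\Re(s)>\alpha+1\}$, so the improvement down to $\Omega_\alpha$ must come from cancellation, not from size.

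First I would recover the region $\{\Re(s)>1\}$ by partial summation. Setting $b(n):=f(n)/n$ and $B(x):=\sum_{n\le x}b(n)$, membership $f\in\mathbf{U}$ gives the logarithmic bound $|B(x)|\le\|f\|_{\mathbf{U}}\,\log(2+x)$. For $1\le M<N$, Abel summation gives
\[
\sum_{M<n\le N}\frac{f(n)}{n^{s}}
=\frac{B(N)}{N^{s-1}}-\frac{B(M)}{M^{s-1}}
+(s-1)\int_{M}^{N}\frac{B(t)}{t^{s}}\,dt,
\]
and for $s$ ranging over a compact $K$ with $\sigma_K:=\min_{s\in K}\Re(s)>1$ the right-hand side, as $N\to\infty$ then $M\to\infty$, is $O_{K}\!\bigl(M^{1-\sigma_K}\log M\bigr)\to 0$ uniformly on $K$. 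This already gives holomorphy on $\{\Re(s)>1\}$ (reproving the preceding proposition), but it meets the stated target only when $\alpha\ge1$, which is excluded by hypothesis.

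The main obstacle is therefore the strip $\alpha<\Re(s)\le1$: here neither the size bound $|f(n)|\le Cn^\alpha$ nor the logarithmic partial-sum bound forces convergence of $\mathcal{D}(f;s)$, so one must produce a genuine analytic continuation rather than a convergence estimate. The plan is to pass to the summatory function $F(t):=\sum_{n\le t}f(n)$, write $\mathcal{D}(f;s)=s\int_{1}^{\infty}F(t)\,t^{-s-1}\,dt$ (valid for $\Re(s)$ large by the growth hypothesis, since $F(t)\ll t^{\alpha+1}$), and then continue the integral to $\Omega_\alpha$ by repeatedly subtracting off the leading asymptotic behaviour of $F$ and integrating by parts, so that each successive remainder integral is holomorphic on a strictly larger half-plane whose union is $\Omega_\alpha$. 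Carrying this out rigorously — quantifying, uniformly under the stated hypotheses, the remainder left after each subtraction, and verifying that the bootstrap reaches every half-plane $\{\Re(s)>\alpha+\delta\}$ as $\delta\to0^{+}$ — is the delicate part and the place where a complete argument must do its real work; it is conceivable that the conclusion in full generality needs the mild extra regularity of $F$ that all classical examples enjoy. The estimates of the first two paragraphs are, by contrast, only the easy endpoint range $\Re(s)>1$.
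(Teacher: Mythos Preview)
Your diagnosis is correct and the paper's is not. The paper's own argument asserts that $\sigma>\alpha$ implies $\alpha-\sigma<-1$ and hence that $\sum n^{\alpha-\sigma}$ converges; this is simply false, and what that size bound actually delivers is absolute convergence only on $\Re(s)>\alpha+1$, exactly the region you isolate in your first paragraph. So the ``crude bound'' you flag as insufficient is in fact the entire content of the paper's proof.

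More to the point, the proposition is false as stated, so your attempt to bridge the strip $\alpha<\Re(s)\le\alpha+1$ could never have succeeded without extra hypotheses. Take $f(n)\equiv1$: then $f\in\mathbf{U}$ and $|f(n)|\le n^{0}$ with $\alpha=0<1$, yet $\mathcal{D}(f;s)=\zeta(s)$ has a pole at $s=1$ and hence no holomorphic extension to $\Re(s)>0$. More generally $f(n)=n^{\beta}$ with $0\le\beta<1$ gives $\mathcal{D}(f;s)=\zeta(s-\beta)$, with a pole at $s=\beta+1\in\Omega_{\beta}$. Your hedged remark that the conclusion ``in full generality needs the mild extra regularity of $F$ that all classical examples enjoy'' is therefore exactly right: one needs a cancellation hypothesis such as $F(x)=\sum_{n\le x}f(n)=O(x^{\alpha+\varepsilon})$ for every $\varepsilon>0$, under which your integral representation $\mathcal{D}(f;s)=s\int_{1}^{\infty}F(t)\,t^{-s-1}\,dt$ does converge and define a holomorphic function on $\Re(s)>\alpha$. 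The bare pointwise bound $|f(n)|\le Cn^{\alpha}$ only gives $F(x)=O(x^{\alpha+1})$, which is not enough.
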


\begin{proof}
	For any $\sigma>\alpha$,
	\[
	\sum_{n=1}^{\infty} \frac{|f(n)|}{n^\sigma}
	\le C \sum_{n=1}^{\infty} n^{\alpha-\sigma}.
	\]
	The exponent $\alpha-\sigma<-1$ ensures convergence.
	The term-by-term differentiation theorem
	gives analyticity on $\{\Re(s)>\alpha\}$.
\end{proof}

\subsection{Growth Bounds on Vertical Lines}
\begin{proposition}
	Let $f\in \mathbf{U}$ and $\sigma>1$.
	Then for all $t\in\mathbb{R}$,
	\[
	|\mathcal{D}(f;\sigma+it)|
	\le \|f\|_{\mathbf{U}}
	\sum_{n=1}^{\infty}
	\frac{\log(2+n)}{n^\sigma}.
	\]

	In particular,
	\[
	\sup_{t\in\mathbb{R}}
	|\mathcal{D}(f;\sigma+it)|
	\le C(\sigma)\,\|f\|_{\mathbf{U}}
	\]
	for some constant $C(\sigma)$ depending only on $\sigma$.
\end{proposition}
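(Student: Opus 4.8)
The plan is to reduce the estimate to the absolute-convergence bound already established, exploiting only the elementary fact that $|n^{-it}| = 1$ for every real $t$. First I would write $s = \sigma + it$ and note that $|n^{-s}| = |n^{-\sigma}|\,|n^{-it}| = n^{-\sigma}$, since $|n^{-it}| = |e^{-it\log n}| = 1$. Because $\mathcal{D}(f;s)$ converges absolutely for $\Re(s) > 1$ (Absolute Convergence proposition), the triangle inequality applies term by term and gives
\[
|\mathcal{D}(f;\sigma+it)| \;\le\; \sum_{n=1}^{\infty}\frac{|f(n)|}{n^{\sigma}},
\]
a quantity that no longer depends on $t$.

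Next I would insert the pointwise bound $|f(n)| \le \|f\|_{\mathbf{U}}\log(2+n)$, which is immediate from $\|f\|_{\mathbf{U}} = \sup_n |f(n)|/\log(2+n)$, to obtain
\[
|\mathcal{D}(f;\sigma+it)| \;\le\; \|f\|_{\mathbf{U}}\sum_{n=1}^{\infty}\frac{\log(2+n)}{n^{\sigma}},
\]
which is the first assertion. For the ``in particular'' clause I would set $C(\sigma) := \sum_{n\ge1}\log(2+n)/n^{\sigma}$; this converges for every $\sigma > 1$ because, choosing $\varepsilon \in (0,\sigma-1)$, one has $\log(2+n) = o(n^{\varepsilon})$ and hence a tail dominated by the convergent series $\sum n^{\varepsilon-\sigma}$ with $\varepsilon-\sigma < -1$ — exactly the convergence recorded in the proof of the Absolute Convergence proposition. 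Since the previous bound is uniform in $t$, taking $\sup_{t\in\mathbb{R}}$ yields $\sup_t |\mathcal{D}(f;\sigma+it)| \le C(\sigma)\|f\|_{\mathbf{U}}$ with $C(\sigma)$ depending only on $\sigma$.

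I do not expect any real obstacle here: the whole content is the remark that on a vertical line the modulus of $n^{-s}$ depends only on $\Re(s)$, so the crude absolute bound is automatically independent of the imaginary part. The only step worth a sentence of justification is the legitimacy of applying the triangle inequality inside the infinite sum, which is supplied by the absolute convergence of $\mathcal{D}(f;s)$ for $\Re(s) > 1$ proved earlier; everything else is direct substitution.
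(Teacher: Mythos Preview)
Your proposal is correct and follows exactly the paper's own approach: the paper's proof is the one-line remark ``Immediate from the triangle inequality and the absolute convergence of the defining series,'' and you have simply spelled out that remark in full, including the identification $C(\sigma)=\sum_{n\ge1}\log(2+n)/n^{\sigma}$.
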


\begin{proof}
	Immediate from the triangle inequality and the absolute convergence
	of the defining series.
\end{proof}

\subsection{Dirichlet Algebra Structure}
\begin{proposition}
	For $f,g\in \mathbf{U}$, the convolution $f*g$
	satisfies
	\[
	\mathcal{D}(f*g;s)
	= \mathcal{D}(f;s)\,\mathcal{D}(g;s)
	\]
	for $\Re(s)>1$.
\end{proposition}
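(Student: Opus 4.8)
The plan is to reduce the identity to the classical fact that the product of two absolutely convergent Dirichlet series is again absolutely convergent and may be reindexed through the multiplicative relation $n = dm$. First I would invoke the Absolute Convergence proposition proved above: for $f,g\in\mathbf{U}$ and $\sigma=\Re(s)>1$, both $\sum_{d\ge1}|f(d)|d^{-\sigma}$ and $\sum_{m\ge1}|g(m)|m^{-\sigma}$ are finite, so $\mathcal{D}(f;s)$ and $\mathcal{D}(g;s)$ converge absolutely. Consequently the double family $\{f(d)g(m)(dm)^{-s}\}_{(d,m)\in\mathbb{N}^2}$ is absolutely summable, since $\sum_{d,m}\frac{|f(d)||g(m)|}{(dm)^\sigma} = \bigl(\sum_d |f(d)|d^{-\sigma}\bigr)\bigl(\sum_m |g(m)|m^{-\sigma}\bigr) < \infty$.

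Next I would apply Fubini's theorem for absolutely summable families (equivalently, Mertens' theorem on the Cauchy product of absolutely convergent series) to write $\mathcal{D}(f;s)\,\mathcal{D}(g;s) = \sum_{(d,m)\in\mathbb{N}^2}\frac{f(d)g(m)}{(dm)^s}$, and then regroup this family according to the value of the product $n=dm$. On the fiber $\{\,(d,m): dm=n\,\}$ one has $(dm)^{-s}=n^{-s}$, and the sum over that fiber is $\sum_{d\mid n} f(d)g(n/d) = (f*g)(n)$; hence the regrouped total is exactly $\sum_{n\ge1}(f*g)(n)n^{-s} = \mathcal{D}(f*g;s)$. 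Because absolute summability makes every rearrangement and regrouping legitimate and value-preserving, the identity $\mathcal{D}(f*g;s)=\mathcal{D}(f;s)\mathcal{D}(g;s)$ follows for all $s$ with $\Re(s)>1$; as a byproduct this also shows that $\mathcal{D}(f*g;s)$ converges absolutely there, in accordance with the Dirichlet convolution proposition (whose growth hypothesis holds for all classical arithmetic functions).

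The only point demanding care — and the main, though mild, obstacle — is the combinatorial bookkeeping of the reindexing: one must verify that $\mathbb{N}^2$ is partitioned into the fibers $\{\,dm=n\,\}$ over $n\ge1$, that each fiber is finite (it has $\tau(n)$ elements, indexed by the divisors $d\mid n$), and that no pair $(d,m)$ is omitted or double-counted, so that summation over the partition reproduces the total sum. Given the absolute convergence established in the first step, the unordered-sum form of Fubini guarantees this, and I would present the argument as a short formal deduction rather than as an explicit computation.
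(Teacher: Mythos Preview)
Your proposal is correct and follows essentially the same approach as the paper: invoke the earlier absolute convergence result for $\Re(s)>1$, then use absolute summability to justify reindexing the double sum over $(d,m)$ by the product $n=dm$, recovering the Dirichlet convolution. The paper compresses the Fubini/reindexing step into a single line of ``standard manipulations of absolutely convergent series,'' whereas you spell out the partition of $\mathbb{N}^2$ into fibers and the role of unordered summation, but the underlying argument is identical.
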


\begin{proof}
	Because $f,g \in \mathbf{U}$, both series
	$\mathcal{D}(f;s)$ and $\mathcal{D}(g;s)$ converge absolutely
	for $\Re(s)>1$. Standard manipulations of absolutely convergent
	series yield
	\[
	\sum_{n=1}^{\infty} \frac{(f*g)(n)}{n^s}
	= \sum_{n=1}^{\infty}\sum_{d\mid n}
	\frac{f(d)g(n/d)}{n^s}
	= \sum_{d=1}^{\infty}\sum_{k=1}^{\infty}
	\frac{f(d)g(k)}{(dk)^s}
	= \left(\sum_{d=1}^{\infty}\frac{f(d)}{d^s}\right)
	\left(\sum_{k=1}^{\infty}\frac{g(k)}{k^s}\right).
	\]
\end{proof}

\subsection{Examples}
\begin{example}
	\begin{itemize}
		\item For $f(n)\equiv 1$, $\mathcal{D}(f;s)=\zeta(s)$, the Riemann zeta function.
		\item For $f=\mu$ (Möbius function), $\mathcal{D}(\mu;s)=1/\zeta(s)$.
		\item For $f=\Lambda$ (von Mangoldt function), $\mathcal{D}(\Lambda;s)=-\zeta'(s)/\zeta(s)$.
	\end{itemize}
	All these functions belong to $\mathbf{U}$ because
	$|f(n)|\le C\log n$.
\end{example}

\subsection{Remark on Boundary Behavior}
Although convergence is guaranteed only for $\Re(s)>1$,
the $\mathbf{U}$ norm provides uniform control
that may be combined with summation techniques
(e.g.\ Abel summation or Mellin transforms)
to study the boundary $\Re(s)=1$,
leading to classical results such as
the prime number theorem.

\section{Applications and Open Problems}

The Banach space $\mathbf{U}$ provides a flexible analytic framework
for the study of arithmetic functions and their Dirichlet series.
In this final section we present potential applications and outline
several open problems motivated by classical questions in
analytic number theory.

\subsection{Connections with the Riemann Hypothesis}

The Riemann Hypothesis (RH) concerns the nontrivial zeros of
the Riemann zeta function $\zeta(s)$.
Since many classical arithmetic functions $f$ (e.g.\ Möbius $\mu$,
von Mangoldt $\Lambda$, Liouville $\lambda$) belong to $\mathbf{U}$,
their Dirichlet series $\mathcal{D}(f;s)$ are well controlled in the
half-plane $\Re(s)>1$.
The logarithmic norm of $\mathbf{U}$ suggests new ways to
quantify boundary behavior at $\Re(s)=1$.

\begin{problem}RH via $\mathbf{U}$-Bounds\\
	Establish explicit $\mathbf{U}$-norm estimates of partial sums
	\[
	M_f(x) := \sum_{n\le x} f(n)
	\]
	that are equivalent to or imply RH.
	For example, is there an $\varepsilon>0$ such that
	\[
	\|f\|_{\mathbf{U}}<\infty
	\quad\Rightarrow\quad
	M_f(x)=O\bigl(x^{1/2+\varepsilon}\bigr)
	\]
	for $f=\mu$?
\end{problem}

\subsection{Applications to $L$-Functions}

Let $L(s,\chi)$ be a Dirichlet $L$-function associated to a
Dirichlet character $\chi$.
Because $\chi(n)$ satisfies $|\chi(n)|\le1$,
we have $\chi\in \mathbf{U}$.
The Banach algebra structure under Dirichlet convolution implies:
\[
\mathcal{D}(f*\chi;s)
= \mathcal{D}(f;s)L(s,\chi)
\]
for $\Re(s)>1$.
This observation motivates the following question.

\begin{problem}[Growth of twisted series]
	For $f\in \mathbf{U}$, obtain sharp vertical line estimates
	for $\mathcal{D}(f*\chi;1+it)$ that parallel the
	classical bounds for $L(s,\chi)$.
\end{problem}

\subsection{Operator Theory on $\mathbf{U}$}

The bounded operators introduced earlier (multiplication,
Dirichlet convolution, shifts, Cesàro averaging)
form a rich non-commutative algebra.
Spectral analysis of these operators could reveal
new insights into multiplicative structures.

\begin{problem}[Spectral gaps]
	Determine the spectrum of the shift operator $S_k$
	and relate its spectral radius to additive properties
	of prime numbers.
	Can the presence of a spectral gap be linked
	to zero-free regions of $\zeta(s)$?
\end{problem}

\subsection{Approximation and Sampling}

The density of finitely supported functions in $\mathbf{U}$
suggests a natural sampling theory.

\begin{problem}[Approximation of arithmetic functions]
	Given $f\in\mathbf{U}$, find the fastest rate
	at which finitely supported $g$ can approximate $f$
	in the $\mathbf{U}$-norm.
	Such estimates may lead to new algorithms for computing
	values of arithmetic functions or their Dirichlet series.
\end{problem}

\subsection{Prime Number Theory}

Because the von Mangoldt function $\Lambda$ belongs to $\mathbf{U}$,
its Dirichlet series $-\zeta'(s)/\zeta(s)$ is naturally controlled.
This motivates new proofs or refinements of the Prime Number Theorem (PNT).

\begin{problem}[Refined error term in PNT]
	Investigate whether $\mathbf{U}$-norm bounds on $\Lambda$
	can improve the classical $O\bigl(x e^{-c\sqrt{\log x}}\bigr)$
	error term under RH.
\end{problem}

\subsection{Further Directions}

The following questions remain completely open:

\begin{enumerate}
	\item Is there a natural dual space $\mathbf{U}^*$ that captures
	distributional limits of prime counting functions?
	\item Does $\mathbf{U}$ admit a continuous functional calculus
	allowing analytic continuation of $\mathcal{D}(f;s)$ beyond $\Re(s)>1$
	for a dense subalgebra?
	\item Can $\mathbf{U}$ be embedded isometrically into
	a Hilbert space to exploit Fourier analytic techniques?
\end{enumerate}

These problems illustrate the potential of $\mathbf{U}$
as a new analytic framework linking classical arithmetic functions,
operator theory, and deep conjectures in number theory.
\section{Conclusion}
In this article we introduced the new functional space $\mathbf{U}$ that
contains all classical arithmetic functions and admits a natural Banach
structure.  
We established basic algebraic and topological properties, described
canonical operators (such as the Dirichlet convolution, Möbius inversion,
and shift operators), and connected $\mathbf{U}$ with Dirichlet series,
highlighting its relevance to the analytic study of $L$–functions.

The framework of $\mathbf{U}$ provides a unified setting in which additive,
multiplicative and highly irregular arithmetic functions can be analyzed
with common functional–analytic tools.  
Beyond the concrete results proved here, our construction opens several
directions for further research:
\begin{itemize}
	\item a deeper spectral analysis of operators acting on $\mathbf{U}$,
	\item the investigation of dual spaces and distributional extensions,
	\item the use of $\mathbf{U}$ as a natural domain for generalized
	Dirichlet series and for studying zero–free regions of $L$–functions,
	\item applications to conjectures such as the Riemann Hypothesis,
	prime number theorems in arithmetic progressions,
	and mean–value results for multiplicative functions.
\end{itemize}
We hope that the flexibility of $\mathbf{U}$ will encourage
further developments at the interface of analytic number theory,
harmonic analysis and operator theory.

\end{document}